\documentclass[12pt,reqno]{amsart}
\usepackage{amsmath, amsthm, amsopn, amssymb, microtype}
\usepackage{mathrsfs} 
\usepackage{mathscinet}
\usepackage{bbm}
\usepackage{enumerate, tikz, etoolbox, intcalc, geometry, caption, subcaption, afterpage}
\usepackage[english]{babel}
\usepackage{todonotes}

\title{Automatic continuity of Polynomial maps and cocycles} % and unique erogodicity} 
\author{Tom Meyerovitch}
\address{Ben Gurion University of the Negev.
	Departement of Mathematics.
	Be'er Sheva, 8410501, Israel
}

\author{Omri Nisan Solan}
\address{The Hebrew University of Jerusalem.
Einstein Institute of Mathematics
Edmond J. Safra Campus
Givat Ram. Jerusalem, 9190401, Israel
}

\usepackage[colorlinks=true,urlcolor=blue,pdfborder={0 0 0}]{hyperref}
\hypersetup{linkcolor=[rgb]{0,0,0.6}}
\hypersetup{citecolor=[rgb]{0,0.6,0}}
\usepackage[nameinlink]{cleveref}
\crefname{theorem}{Theorem}{Theorems}
\crefname{thm}{Theorem}{Theorems}
\crefname{mainthm}{Theorem}{Theorems}
\crefname{lemma}{Lemma}{Lemmas}
\crefname{lem}{Lemma}{Lemmas}
\crefname{remark}{Remark}{Remarks}
\crefname{prop}{Proposition}{Propositions}
\crefname{defn}{Definition}{Definitions}
\crefname{corollary}{Corollary}{Corollaries}
\crefname{cor}{Corollary}{Corollaries}
\crefname{section}{Section}{Sections}
\crefname{figure}{Figure}{Figures}
\crefname{quest}{Question}{Questions}

% Margins
\setlength{\topmargin}{0in}
\setlength{\leftmargin}{0in}
\setlength{\rightmargin}{0in}
\setlength{\evensidemargin}{0in}
\setlength{\oddsidemargin}{0in}

% Text area size
\setlength{\textwidth}{6.5in}
\setlength{\textheight}{9in}
% Fonts

\newcommand{\R}{\mathbb{R}}

%comments
%\font\sn = cmssi12 scaled \magstep0
%\font\si = cmssi12 scaled \magstep0
%\long\def\comtom#1{\ifdraft{{\color{blue}\sn Tom says ``#1'' }}\else\ignorespaces\fi}
%\long\def\comshery#1{\ifdraft{\color{green}\si Shrey says ``#1'' }\else\ignorespaces\fi}
%\long\def\comyaar#1{\ifdraft{\color{teal}\si Yaar says ``#1'' }\else\ignorespaces\fi}

% Definitions
\newtheorem{thm}{Theorem}[section]
\newtheorem{lemma}[thm]{Lemma}

\theoremstyle{definition}
\newtheorem{definition}[thm]{Definition}

\begin{document}
\begin{abstract}
Classical theorems from the early 20th century
state that
any Haar measurable homomorphism between locally compact groups is continuous. In particular, any Lebesgue-measurable homomorphism $\phi:\mathbb{R} \to \mathbb{R}$ is of the form $\phi(x)=ax$ for some $a \in \mathbb{R}$. In this short note, we prove that any Lebesgue measurable function $\phi:\R \to \R$ that vanishes under any $d+1$ ``difference operators''  is a polynomial of degree at most $d$. More generally, we prove the continuity of any Haar measurable polynomial map  between locally compact groups, in the sense of Leibman. We deduce the above result as a direct consequence of a theorem about the automatic continuity of cocycles. 
 \end{abstract}

\maketitle

\section{Statement of results}

The Cauchy functional equation 
\begin{equation}
    \phi(x+y)= \phi(x)+\phi(y) \mbox{ for all }x,y, 
\end{equation}
is a well-known and fundamental equation in the theory of functional equations. 
In 1913,  it was shown by   Fr{\'e}chet  that any Lebesgue measurable solution  $\phi:\R \to \R$ of the Cauchy functional equation is continuous, thus takes the form $\phi(x)=ax$ for some $a \in \R$ \cite{frechet1913pri}.  Other proofs have been published, for instance by Banach \cite{banach1920equation} and  Sierpi{\'n}ski \cite{sierpinski1920equation} in 1920. See for instance \cite{MR3627381} for a relatively recent survey of some results around the  Cauchy functional equation.
%Sierposnki \cite{sierpinski1920equation} 
%\cite{steinhaus1937distances} and \cite{banach1920equation} \cite{steinhaus1937distances}}.
Given  $t \in \R$ and $f:\R \to \R$,
denote by $\Delta_t f:\R \to \R$ the function given by
\[
\Delta_t f(x)= f(x+t) - f(x).
\]

A function $\phi:\R \to \R$ satisfies Cauchy's functional equation if and only if $\phi(0)=0$ and 
\[
\Delta_x \Delta_y \phi = 0 \mbox{ for all }x,y.
\]
 Fr{\'e}chet's theorem can thus be viewed as the case $d=1$ of the following theorem:
\begin{thm}\label{thm:poly_R_diff}
    Let $f:\R \to \R$ be a Lebesgue measurable function. Suppose that there exists $d \ge 0$ that satisfies
    \begin{equation}\label{eq:D_n_f_const}
    \Delta_{t_d}\ldots \Delta_{t_0} f(x) =0 \mbox{ for all } x,t_0,\ldots,t_d \in \R.    
    \end{equation}
    
    Then $f$ is a polynomial of degree at most $d$. Namely, there exists $a_0,\ldots, a_d \in \R$ such that
    \[ f(x) =a_0 + a_1x +\ldots a_d x^d.\]
\end{thm}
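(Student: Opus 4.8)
The plan is to prove the statement by induction on $d$, removing one power of $x$ at each step. The base case $d=0$ is immediate: $\Delta_{t_0}f\equiv 0$ for every $t_0$ says $f(x+t_0)=f(x)$ for all $x$ and $t_0$, so $f$ is constant. Assume now $d\ge 1$ and that the theorem holds with $d$ replaced by $d-1$. The operators $\Delta_t$ commute, and for each fixed $s\in\R$ the function $g_s:=\Delta_s f$ is again Lebesgue measurable (a difference of translates of $f$); moreover \eqref{eq:D_n_f_const} for $f$ says $\Delta_{t_d}\cdots\Delta_{t_1}g_s=0$ for all $t_1,\ldots,t_d$, which is exactly \eqref{eq:D_n_f_const} for $g_s$ with $d$ replaced by $d-1$. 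By the inductive hypothesis $g_s$ is a polynomial in $x$ of degree at most $d-1$, so we may write
\[
\Delta_s f(x)=\sum_{j=0}^{d-1}a_j(s)\,x^j .
\]

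Next I would check that the coefficient functions $a_j$ are Lebesgue measurable. A polynomial of degree at most $d-1$ is recovered from its values at $x=0,1,\ldots,d-1$ by Lagrange interpolation, so each $a_j(s)$ is a fixed (independent of $s$) linear combination of the numbers $\Delta_s f(n)=f(n+s)-f(s)$, $n=0,\ldots,d-1$. Since Lebesgue measurability is preserved under translating the argument, each $a_j:\R\to\R$ is Lebesgue measurable.

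The heart of the argument is to pin down the top coefficient $a_{d-1}$. A direct computation gives the cocycle identity
\[
\Delta_{s+u}f(x)=\Delta_s f(x)+(\Delta_u f)(x+s)\qquad(s,u,x\in\R).
\]
Substituting the expansion above and comparing the coefficients of $x^{d-1}$ on the two sides — using that the substitution $x\mapsto x+s$ does not change the coefficient of $x^{d-1}$ of a polynomial of degree at most $d-1$ — gives $a_{d-1}(s+u)=a_{d-1}(s)+a_{d-1}(u)$ for all $s,u$. Thus $a_{d-1}$ is a Lebesgue measurable solution of the Cauchy functional equation, and Fréchet's theorem yields $a_{d-1}(s)=\alpha s$ for some $\alpha\in\R$. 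Now set $\tilde f(x):=f(x)-\tfrac{\alpha}{d}x^d$, still Lebesgue measurable. Since the coefficient of $x^{d-1}$ in $\Delta_s(x^d)=(x+s)^d-x^d$ is $ds$, the coefficient of $x^{d-1}$ in the polynomial $\Delta_s\tilde f(x)$ is $a_{d-1}(s)-\alpha s=0$; hence for every $s$ the function $\Delta_s\tilde f$ is a polynomial in $x$ of degree at most $d-2$, and applying $d-1$ further difference operators annihilates it. Therefore $\tilde f$ satisfies \eqref{eq:D_n_f_const} with $d$ replaced by $d-1$, so by the inductive hypothesis $\tilde f$ is a polynomial of degree at most $d-1$, and $f(x)=\tilde f(x)+\tfrac{\alpha}{d}x^d$ is a polynomial of degree at most $d$.

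I expect the main obstacle to lie in the bookkeeping rather than in any single deep step: one must make sure the coefficient functions $a_j$ genuinely inherit Lebesgue measurability from $f$, and one must extract the additivity relation for $a_{d-1}$ with exactly the right normalization so that subtracting $\tfrac{\alpha}{d}x^d$ precisely removes the obstruction to lowering the degree. Once this is arranged, the induction runs cleanly, and Fréchet's theorem (the case $d=1$) carries all of the genuinely measure-theoretic content.
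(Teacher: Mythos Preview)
Your argument is correct (one slip: $\Delta_s f(n)=f(n+s)-f(n)$, not $f(n+s)-f(s)$; the measurability conclusion is unaffected, since $s\mapsto f(n+s)-f(n)$ is a translate of $f$ minus a constant).

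The route, however, is genuinely different from the paper's. The paper's main proof deduces the result from a general automatic-continuity theorem for cocycles: it shows that any Haar-measurable $f$ with every $\Delta_g f$ continuous is itself continuous, and then inducts on the degree to obtain continuity of any measurable Leibman polynomial; the identification of continuous real Leibman polynomials with ordinary polynomials is then a separate standard fact. The paper also sketches, for vector spaces, a polarization argument that extracts the full symmetric $d$-additive map $\Phi(x_1,\ldots,x_d)=\tfrac{1}{d!}\Delta_{x_1}\cdots\Delta_{x_d}f$ and applies Fr\'echet/Weil in each coordinate. Your proof is a streamlined, one-variable-at-a-time version of the latter: rather than the whole $\Phi$, you isolate only the top coefficient $a_{d-1}(s)$ (which equals $d\,\Phi(s,1,\ldots,1)$), show it is additive, and subtract the corresponding monomial before recursing. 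This is more elementary and delivers the explicit polynomial form directly, whereas the paper's cocycle approach buys the far more general statement for arbitrary locally compact Polish groups.
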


By elementary considerations,  Fr{\'e}chet's theorem is equivalent to the statement that any  Lebesgue measurable homomorphism of $\R$ is continuous.  Improving on    Sierpi{\'n}ski's work,  Steinhaus showed that for any Lebesgue measurable set $A \subset \mathbb{R}$ of positive measure, $A-A$ is a neighborhood of $0$ \cite{steinhaus1937distances},  leading  to another short proof of Fr{\'e}chet's theorem.

 Weil extended  Steinhaus's result and  deduced that any Haar measurable homomorphism between locally compact Polish groups is continuous. 
The phenomena of continuity following from apriori weaker assumptions is called ``automatic continuity''.
See Rosendal's survey \cite{MR2535429} for more on the subject of automatic continuity of homomorphisms.

We will use multiplicative notation for the group operation whenever we do not explicitly assume that the group is commutative, and additive notation whenever we do explicitly assume that the group is commutative.
 \Cref{thm:poly_R_diff} and   Weil's result can be unified  using Leibman's notion of \emph{polynomial maps} between groups, which we now recall \cite{MR1910931}:
\begin{definition}\label{def:diff_op}
Let $G, H$ be groups. 
Given an  element $g \in G$,   the \emph{difference operator} $\Delta_g$ on the space of functions from $G$ to $H$ is given by
\[
\Delta_g f(x) = f(gx)f(x)^{-1},\mbox{ for } f: G \to H \mbox{ and } x \in G.
\]
\end{definition} 
In the case of commutative groups $G, H$, when the group operations are written in additive notation, the difference operation $\Delta_g$ with respect to an element $g \in G$ can be written as follows:
\[
\Delta_g f(x) = f(x+g)-f(x),\mbox{ for } f: G \to H \mbox{ and } x \in G.
\]
\begin{definition}\label{def:Liebman_poly}
A \emph{Leibman polynomial of degree at most $d$ } is a function $P: G \to H$ that vanishes after applying any $d$ difference operators.
That is, $P:G \to H$ is   a Leibman map of degree at most $d$ if
\[
\Delta_{g_d}\ldots \Delta_{g_0} P(x) =1_H \mbox{ for all } x,g_0,\ldots,g_d \in G,
\]
where $1_H$ denotes  the identity element of $H$.
\end{definition}

Leibman polynomials of degree $0$ are precisely constant functions. A Leibman polynomial of degree $1$ is an ``affine homomorphism'', namely function $P: G \to H$ of the form $P(x) = h \phi(x)$, where $\phi: G \to H$ is a homomorphism and $h \in H$ is a fixed element.

\begin{thm}\label{thm:poly_aut_cont}
Let $P: G \to H$ be a Haar measurable Leibman polynomial between locally compact  Polish groups $G, H$. Then $P$ is continuous.
\end{thm}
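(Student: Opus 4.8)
The plan is to prove \Cref{thm:poly_aut_cont} by induction on the degree $d$. The case $d=0$ is trivial, since then $P$ is constant, so fix $d\ge 1$ and assume the statement for Leibman polynomials of degree at most $d-1$; let $P\colon G\to H$ be Haar measurable and Leibman of degree at most $d$. For each fixed $g\in G$, the translate $x\mapsto P(gx)$ is again Haar measurable (left translation preserves the ideal of Haar-null sets), so $\Delta_gP$ is Haar measurable, and it is immediate from \Cref{def:Liebman_poly} that $\Delta_gP$ is a Leibman polynomial of degree at most $d-1$. By the inductive hypothesis, $\Delta_gP$ is continuous for every $g\in G$. Since $P(g)=\Delta_gP(1_G)\cdot P(1_G)$, the theorem follows as soon as we show that the map $c(g,x):=\Delta_gP(x)=P(gx)P(x)^{-1}$ is jointly continuous on $G\times G$; equivalently, that $g\mapsto\Delta_gP$ is continuous from $G$ into $C(G,H)$ with the compact--open topology (using that $G$ is locally compact, so that evaluation $C(G,H)\times G\to H$ is continuous).

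It is convenient to view $c$ as a cocycle: a direct computation gives $c(g_1g_2,x)=c(g_1,g_2x)\,c(g_2,x)$, so $g\mapsto\Delta_gP$ is a coboundary cocycle over the action of $G$ on itself by left translation, whose maps $\Delta_gP=c(g,\cdot)$ are all continuous. Moreover $c$ is jointly Haar measurable on $G\times G$: the map $(g,x)\mapsto P(gx)$ is measurable because the preimage under $(g,x)\mapsto gx$ of a Haar-null subset of $G$ is Haar-null in $G\times G$, by Fubini together with the fact that left and right translations preserve Haar-null sets. This is precisely the situation handled by the automatic continuity theorem for cocycles advertised in the abstract (specialised to coboundaries over the translation action); for a self-contained argument one instead proceeds as in the next paragraph.

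The crux of the proof --- and the step I expect to be the main obstacle --- is to pass from ``$P$ is Haar measurable and every $\Delta_gP$ is continuous'' to ``$P$ is continuous'', i.e.\ to establish continuity of the coboundary cocycle $g\mapsto\Delta_gP$. When $d=1$ this is exactly Weil's theorem: there $\Delta_gP$ is constant in $x$, so $x\mapsto P(x)P(1_G)^{-1}$ is a Haar measurable homomorphism and hence continuous. In general I would run a Steinhaus--Weil bootstrap. Since $H$ is $\sigma$-compact, some $P^{-1}(C_0)$ with $C_0\subseteq H$ compact has positive Haar measure; by Lusin's theorem there is a compact $K\subseteq P^{-1}(C_0)$ of positive measure on which $P$ is continuous, and by Steinhaus--Weil $KK^{-1}$ is a neighbourhood of $1_G$. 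Feeding this into the identity $P(gx)=\Delta_gP(x)\,P(x)$, using local boundedness of the continuous maps $\Delta_gP$ and a Fubini/Baire-category argument to control the dependence on $g$, one shows that $P$ is bounded on a neighbourhood of $1_G$; the cocycle relation then propagates local boundedness of $P$ to all of $G$. Finally, once $P$ is locally bounded the cocycle identity promotes the pointwise-in-$g$ continuity estimates coming from continuity of each $\Delta_gP$ to estimates uniform on compact sets, giving continuity of $g\mapsto\Delta_gP$ and therefore of $P$, which closes the induction. The induction skeleton, the base case, and the verification that all maps in sight stay measurable after composing with continuous group operations (which needs some care with completions and Fubini) are routine; the real content is isolating and proving the cocycle continuity statement that makes this bootstrap go through in the general non-abelian, locally compact Polish setting --- an alternative being to first reduce to abelian $H$, where Leibman polynomials of bounded degree form an abelian group and the structure theory of generalised polynomials is available, and handle that case by the above bootstrap.
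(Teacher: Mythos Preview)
Your core reduction is correct and coincides with the paper's: induct on the degree, observe that each $\Delta_gP$ is a Haar measurable Leibman polynomial of lower degree and hence continuous by the inductive hypothesis, and then invoke the automatic-continuity theorem for cocycles (applied to the coboundary $c(g,x)=P(gx)P(x)^{-1}$ over the left-translation action) to conclude that $P$ is continuous. This is exactly how the paper deduces \Cref{thm:poly_aut_cont} from \Cref{thm:diff_aut_cont} and \Cref{thm:aut_cont_cocycle}; note that the cocycle theorem only needs $g\mapsto c(g,x)$ Haar measurable and $x\mapsto c(g,x)$ continuous, so the joint-measurability verification you outline is not actually required.

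Your third paragraph, offered as a self-contained substitute for the cocycle theorem, is not a proof as written: the passage from ``$P$ continuous on a Lusin set $K$'' to ``$P$ locally bounded on $KK^{-1}$'' is not justified (writing $g=k_1k_2^{-1}$ does not by itself control $P(g)$), and the subsequent ``promotion to uniform estimates'' is asserted rather than carried out. The paper's actual argument for the cocycle step is different and more concrete: a separability argument in $C(K,H)$ (\Cref{lem:pos_measure_small_diameter_image}) produces a positive-measure set $E\subseteq G$ on which $(g_1,g_2)\mapsto c(g_1,g_2\cdot x_0)$ has small diameter; then for $g=g_1g_2^{-1}\in EE^{-1}$ the cocycle identity gives $c(g,x)=c(g_1,g_2^{-1}x)\,c(g_2,g_2^{-1}x)^{-1}$, which lands in a prescribed neighbourhood of $1_H$, and Steinhaus--Weil makes $EE^{-1}$ a neighbourhood of $1_G$. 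So your invocation of the cocycle theorem is the right move; the bootstrap sketch you propose as an alternative would need substantial work to become an argument.
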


In some cases, there is an explicit description of the (continuous) Leibman polynomials. For instance, it is known that any Leibman polynomial from a group $G$ into a nilpotent group $H$ factors through a nilpotent quotient of $G$ \cite{MR1910931}. See also \cite{MR2877065} for a refinement of this statement and \cite[Proposition 4.2]{MR3581232} for a quick proof in the case that $H$ is abelian. 
It is also known that  Leibman polynomials between finitely generated nilpotent groups are in fact ordinary polynomials in the Mal'cev coordinates \cite{MR1910931}. From this, it follows for instance that a real-valued continuous Leibman polynomial from a connected nilpotent group of real matrices is nothing but an ordinary polynomial in the entries of the matrix. 

In the case where $G=V$ and $H=W$ are locally compact topological vector spaces over a field of characteristic $0$, Theorem \ref{thm:poly_aut_cont} is a simple corollary of  Fr{\'e}chet's Theorem.
\begin{proof}[Proof of Theorem \ref{thm:poly_aut_cont} when $G=V$ and $H=W$ (sketch) :]
    The proof uses induction on the degree of the polynomial. The degree $0$ case is trivial and the degree $1$ case is known by Fr{\'e}chet's Theorem for the real case and Weil's Theorem for the general locally compact case. 
    Let $P: V\to W$ be a measurable Leibman polynomial of degree $d$. We can represent \[P(x) = \Phi(x,x,x,\dots,x) + P_{d-1}(x)\] 
    where $P_{d-1}:V\to W$ is a Leibman polynomial of degree $d-1$ and $\Phi:V^d\to W$ is the $d$-additive Haar measurable map given by $\Phi(x_1,\dots,x_d) = \frac{1}{d!}\Delta_{x_1}\ldots \Delta_{x_d} P$. 
    Note that the right-hand side is a constant since $P$ is of degree $d$. By  Weil's Theorem, we deduce that $\Phi$ depends continuously on each coordinate, and from here (using  $d$-additivity of $\Phi$) it is not complicated to deduce its continuity. By induction hypothesis, we deduce that $P_{d-1}$ is continuous, and hence $P$ is continuous as well.
\end{proof}
Using the fact that composition of polynomial maps between nilpotent groups is again polynomial (see \cite[Prop. 3.22]{MR1910931}) one can use the Mal'cev coordinates parametrization of Leibman polynomials to deduce the special case of Theorem \ref{thm:poly_aut_cont}  for Leibman polynomials between nilpotent Lie groups, again by applying Fr{\'e}chet's Theorem directly.

Obtaining the general case of 
\Cref{thm:poly_aut_cont} can seem slightly trickier. However, \Cref{thm:poly_aut_cont} is  an immediate corollary of the following slightly more general result:
\begin{thm}\label{thm:diff_aut_cont}
    Let $f: G \to H$ be a Haar measurable function between locally compact Polish groups $G, H$.
%    For every $g \in G$ consider the function $\Delta_g f: G \to H$
%   given by
%   \[
%   \Delta_g f(x) = f(gx)f(x)^{-1} \mbox{ for all } x \in G.
%   \]
   If $\Delta_g f: G \to H$ is continuous for every $g \in G$, %, with respect to Haar measure on $G$.
    then $f: G \to H$ also continuous.
\end{thm}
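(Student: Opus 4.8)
The plan is to repackage the hypothesis as a statement about a \emph{measurable homomorphism into a Polish group}, and then to run the classical Steinhaus--Weil automatic-continuity argument in that more general setting. The starting point is the ``difference cocycle identity'': for all $g_1,g_2,x\in G$,
\[
\Delta_{g_1 g_2} f(x) = \bigl(\Delta_{g_1} f\bigr)(g_2 x)\cdot\bigl(\Delta_{g_2} f\bigr)(x),
\]
which follows by inserting $f(g_2x)^{-1}f(g_2x)$ into $f(g_1g_2x)f(x)^{-1}$. By hypothesis each $\Delta_g f$ lies in $C(G,H)$, the space of continuous maps $G\to H$ with the compact-open topology; since $G$ is $\sigma$-compact (being locally compact and Polish) and $H$ is Polish, $C(G,H)$ is a Polish topological group under pointwise multiplication, and the translation action $T_g\eta(x):=\eta(gx)$ is \emph{jointly} continuous (here the compact-open, rather than uniform, topology is essential). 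The identity above says exactly that $\Psi(g):=(\Delta_g f,\,g)$ is a group homomorphism from $G$ into the Polish group $M:=C(G,H)\rtimes G$ formed using the action $T$, with product $(\eta_1,g_1)(\eta_2,g_2)=(T_{g_2}(\eta_1)\eta_2,\,g_1g_2)$.

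Next I would verify that $\Psi$ is Haar measurable. Fixing an exhaustion $G=\bigcup_n K_n$ by compacta, restriction embeds $C(G,H)$ Borel-isomorphically into $\prod_n C(K_n,H)$, and the Borel structure of each $C(K_n,H)$ is generated by evaluations at a countable dense subset of $K_n$; hence it suffices that $g\mapsto\Delta_g f(x)=f(gx)f(x)^{-1}$ be Haar measurable for each fixed $x$, which holds because $g\mapsto gx$ is a homeomorphism preserving Haar-null sets and $f$ is Haar measurable. Now apply automatic continuity for a Haar-measurable homomorphism $\Psi$ from a locally compact group into a separable metrizable group: given a neighbourhood $V$ of $1_M$, pick a neighbourhood $V'$ of $1_M$ with $V'^{-1}V'V'^{-1}V'\subseteq V$; by separability $M$ is covered by countably many left translates of $V'$, so some $\Psi^{-1}(mV')$ has positive Haar measure, whence $\mu\bigl(\Psi^{-1}(V'^{-1}V')\bigr)>0$, and by the Steinhaus--Weil theorem $\Psi^{-1}(V'^{-1}V')\cdot\Psi^{-1}(V'^{-1}V')^{-1}$ is a neighbourhood of $1_G$ whose $\Psi$-image lies in $V$. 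Thus $\Psi$ is continuous at $1_G$, hence everywhere; composing $g\mapsto\Delta_g f$ with the continuous evaluation $\eta\mapsto\eta(1_G)$ and using $f(g)=\Delta_g f(1_G)\cdot f(1_G)$ shows that $f$ is continuous.

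I expect the main obstacle to lie in the soft-analysis bookkeeping around the target group $M$. One must check carefully that $C(G,H)$ with the compact-open topology is a Polish \emph{topological} group and that the translation action is jointly continuous (this fails for the topology of uniform convergence, so the choice of topology genuinely matters), and one must confirm that the Weil/Steinhaus automatic-continuity argument works with a target that is Polish but typically \emph{not} locally compact — the sketch above shows it does, since it uses only separability of $M$, the group structure, and Steinhaus--Weil applied in the locally compact group $G$. A routine but necessary companion point is the measurability of $g\mapsto\Delta_g f$ into the function space, which reduces as above to measurability of countably many evaluations. (One could avoid function spaces and argue directly on the ``cocycle'' $g\mapsto\Delta_g f$ — Lusin's theorem plus Steinhaus--Weil plus the cocycle identity — but this is essentially the same argument repackaged, and the homomorphism formulation keeps the role of separability and of the Steinhaus set transparent.)
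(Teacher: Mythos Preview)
Your argument is correct, and it takes a genuinely different route from the paper. The paper deduces this theorem as a corollary of its more general cocycle theorem (Theorem~1.5), whose proof is hands-on: a measure-theoretic lemma (finding a positive-measure set $E$ on which the two-variable map $(g_1,g_2)\mapsto c(g_1,g_2 x_0)$ has small oscillation), followed by a compactness/continuity argument and Steinhaus--Weil applied to $EE^{-1}$. You instead absorb the cocycle identity into a single group homomorphism $\Psi:G\to C(G,H)\rtimes G$ and invoke automatic continuity for Haar-measurable homomorphisms from a locally compact group into a separable metrizable group; this is clean and makes the role of Steinhaus--Weil completely transparent. The trade-off is that the paper's direct argument immediately yields the stronger Theorem~1.5 for cocycles over an arbitrary locally compact Polish $G$-space $X$, whereas your formulation is tailored to $X=G$; that said, your approach generalises as well, replacing $C(G,H)$ by $C(X,H)$ with the compact-open topology and the induced $G$-action, once one checks the same soft-analysis points (Polish group structure, joint continuity of the action, Borel structure generated by evaluations) in that setting.
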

Again, Weil's theorem is a particular case because for any homomorphism $\phi: G \to H$ $\Delta_g \phi: G \to H$ is a constant function for every $g \in G$, hence continuous.
\Cref{thm:poly_aut_cont} follows directly from \Cref{thm:diff_aut_cont} by induction on $d$. 

We will introduce one more level of generalization: 
Let $G$,$H$ be groups, and let $G \curvearrowright X$ be a continuous action of $G$ on a set $X$. By a continuous action we mean that there is a homomorphism from the group $G$ into the group of homeomorphisms of $X$, so that the map $G \times X \to X$ given by $(g,x) \mapsto g\cdot x$ is continuous. 
%For $x \in X$ and $g \in G$, we denote by $g \cdot x \in X$ the image of $X$ under the the homeomorphism of $X$ associated with the element $g \in G$.
A \emph{cocycle} with respect to this action is a function $c: G \times X \to H$ 
that satisfies
\begin{equation}\label{eq:coycle_eq}
    c(g_1g_2,x) = c(g_1, g_2\cdot x) c(g_2,x).
\end{equation} 
Any function $f:X \to H$  defines a cocycle $\Delta f: G \times X \to H$  given by
\begin{equation}
\Delta f (g,x) = f(g\cdot x) f^{-1}(x) \mbox { for } g \in G, x\in X.   
\end{equation}

The map  $f \mapsto \Delta f$ is called the \emph{coboundary map}. It is easy to check that $c=\Delta f$ satisfies \eqref{eq:coycle_eq}. A cocycle of the form $\Delta f$ is called a \emph{coboundary}.
For a function $f: G \to H$, we denote by $\Delta f: G \times G \to H$ the coboundary associated with the action of $G$ on itself by multiplication from the left.

From \Cref{eq:coycle_eq} it follows that for any cocycle $c: G \times X \to H$ $c(1_G,x) = 1_H$ for all $x \in X$ and also that for any $(g,x) \in G \times X$ the following holds:
\[ c(g^{-1},x)= \left( c(g,g^{-1}\cdot x)  \right)^{-1}.\]

\begin{thm}\label{thm:aut_cont_cocycle}
    Let $G, H$ be locally compact Polish groups, and let $X$ be a locally compact Polish space.
    Suppose that $G$ acts on $X$ continuously,
    and let $c: G \times X \to H$ be a  cocycle %for this action 
    such that $x \mapsto c(g,x)$ is continuous %with respect to the variable $x\in X$ 
    for every $g \in G$  and so that $g \mapsto c(g,x)$ is Haar measurable for every $x \in X$.
    Then $c: G \times X \to H$ is continuous.
\end{thm}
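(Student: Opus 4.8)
The plan is to reduce continuity of $c$ to a statement about a single measurable function being continuous, by exploiting the cocycle identity to "propagate" continuity from a set of good points. First I would set up the measurable framework: since $G$ is locally compact Polish, it carries a Haar measure, and the hypothesis gives that $g \mapsto c(g,x)$ is Haar measurable for each fixed $x$, while $x \mapsto c(g,x)$ is continuous for each fixed $g$. A natural first step is to show that $(g,x) \mapsto c(g,x)$ is jointly measurable with respect to (the completion of) the product of Haar measure on $G$ with any reasonable measure on $X$: a function separately measurable in one variable and continuous in the other is jointly measurable, by approximating the continuous variable on a countable dense set. With joint measurability in hand, Fubini's theorem becomes available.

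The heart of the argument is a Steinhaus/Weil-type step. Fix a point $x_0 \in X$ and consider the measurable map $g \mapsto c(g, x_0)$. I would like to find a "large" set of $g$'s on which $c$ behaves continuously. The cocycle equation $c(g_1 g_2, x) = c(g_1, g_2 \cdot x)\, c(g_2, x)$ lets us write, for $g$ near the identity and $g_2$ in a well-chosen set, $c(g g_2, x) = c(g, g_2 \cdot x) c(g_2, x)$; if we knew $c(g,\cdot)$ were continuous uniformly enough near $g = 1_G$ this would give continuity of $c(\cdot, x)$ near arbitrary points. The mechanism to get this: by measurability and a Lusin-type argument, there is a compact set $K \subseteq G$ of positive Haar measure on which $g \mapsto c(g, x_0)$ is continuous; then $K^{-1}K$ is a neighborhood of $1_G$ (Steinhaus–Weil), and on this neighborhood one can express $c$ via the cocycle relation in terms of its restriction to $K$, pushing continuity from $K$ to a neighborhood of the identity. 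One must be careful that the "base point" gets shifted by the action: the relation $c(k_1^{-1} k_2, x) = c(k_1^{-1}, k_2 \cdot x) c(k_2, x) = \bigl(c(k_1, k_1^{-1} k_2 \cdot x)\bigr)^{-1} c(k_2, x)$ involves $c(k_i, \cdot)$ evaluated at points of $X$ that move continuously with $k_1, k_2$, and here the hypothesis that $x \mapsto c(g,x)$ is continuous for every fixed $g$ is exactly what is needed — but uniformity over $g \in K$ still requires an argument, presumably another application of measurability/Baire category to find a subset of $K$ on which $(g,x)\mapsto c(g,x)$ is jointly continuous.

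Once continuity of $g \mapsto c(g,x)$ is established near $1_G$ for every $x$ (uniformly locally in $x$, using continuity in the $x$-variable), the cocycle identity upgrades this to continuity of $g \mapsto c(g,x)$ on all of $G$: any $g$ can be written as $g = g' g_0$ with $g_0$ near $1_G$, and $c(g' g_0, x) = c(g', g_0 \cdot x) c(g_0, x)$, where now $c(g', \cdot)$ is continuous by hypothesis and $c(g_0, \cdot)$ is continuous on a neighborhood of the identity by the previous step, so the composition with the continuous action gives joint continuity at $(g', x)$. Finally, joint continuity of $(g,x) \mapsto c(g,x)$ follows by combining separate continuity in each variable with local uniformity; alternatively one invokes the classical fact that a group-valued map which is a cocycle, continuous in one variable and "locally continuous near the identity" in the other, is jointly continuous.

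The main obstacle I anticipate is the uniformity across the $x$-variable in the Steinhaus–Weil step: the set $K$ of good $g$'s obtained from Lusin's theorem applied to $g \mapsto c(g, x_0)$ a priori depends on $x_0$, and to run the cocycle propagation one needs the continuity on $K$ to hold simultaneously for a whole neighborhood of base points. The right way to handle this is probably to work with joint measurability on $G \times X$ from the start, apply a Lusin-type theorem on the product to get a compact set of positive product-measure on which $c$ is jointly continuous, and then show that its "fibered" structure is rich enough — fibers of positive measure over a positive-measure set of $x$'s — that a Steinhaus argument can be run fiberwise and then glued using continuity in $x$. This is the step where the hypotheses must be used in full, and getting the quantifiers in the right order is the delicate part.
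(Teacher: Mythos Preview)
Your overall strategy---Steinhaus--Weil plus the cocycle identity to propagate continuity from a positive-measure set to a neighbourhood of $(1_G,x_0)$, then to all of $G\times X$---is exactly the paper's. You also correctly isolate the genuine difficulty: the cocycle identity shifts the base point in $X$, so continuity of $g\mapsto c(g,x_0)$ on a Lusin set $K$ is not enough; one needs some joint control of $(g,x)\mapsto c(g,x)$ for $g\in K$ and $x$ near $x_0$.

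The gap is in your proposed resolution. Applying Lusin on the product $G\times X$ requires a measure on $X$; there is none given, and even if you impose one you would obtain joint continuity on a set whose $X$-fibres are merely large almost everywhere, not at the particular $x_0$ you must hit. ``Finding a subset of $K$ on which $(g,x)\mapsto c(g,x)$ is jointly continuous'' is the right target but is not achieved by the tools you name.

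The paper sidesteps the problem by never putting a measure on $X$. It proves a small lemma: if $F:Z\times Z\to Y$ is $\mu$-measurable in the first variable and continuous in the second, then inside any positive-measure set one can find a positive-measure $E$ with $F(E\times E)$ of diameter $<\epsilon$. This is applied with $Z=G$, $\mu$ Haar, and $F(g_1,g_2)=c(g_1,g_2\cdot x_0)$---the action pulls the $X$-dependence back into $G$. One obtains $E\subseteq G$ of positive measure and a small ball $B\subseteq H$ with $c(g_1,g_2\cdot x_0)\in B$ for all $g_1,g_2\in E$. A further (measurable-in-$g_1$) shrinking of $E$ and a choice of a small ball $W\ni x_0$ upgrade this to $c(g_1,g_2\cdot x)\in B$ for all $g_1,g_2\in E$, $x\in W$. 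Then $EE^{-1}$ is a neighbourhood of $1_G$ by Steinhaus--Weil, and the cocycle identity gives $c(g,x)\in BB^{-1}$ for $g\in EE^{-1}$, $x\in W$.

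A clean way to repair your approach, closer in spirit to what you wrote, is this: take a compact neighbourhood $X_0$ of $x_0$ and apply Lusin not to $g\mapsto c(g,x_0)$ but to the measurable map $g\mapsto c(g,\cdot)\!\restriction_{X_0}$ with values in the Polish space $C(X_0,H)$. The resulting compact $K\subseteq G$ then has $(g,x)\mapsto c(g,x)$ jointly (hence uniformly) continuous on $K\times X_0$, which is exactly the equicontinuity you were missing; from there your cocycle propagation goes through.
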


In that case where $X=\{x_0\}$ is a singleton, any cocycle $c: G \times X \to H$ is of the form $c(g,x_0)=\phi(g)$, where $\phi$ is a homomorphism. Thus, the case where $X$ is a singleton, \Cref{thm:aut_cont_cocycle} coincides with Weil's theorem on automatic continuity of homomorphisms.
\Cref{thm:diff_aut_cont} follows directly by applying \Cref{thm:aut_cont_cocycle} to the coboundary $\Delta f: G \times G \to H$.

Somewhat different aspects of continuity for cocycles on topological groups have been explored in the literature. In \cite{MR2995370} Becker has shown that whenever $G, H$ are Polish groups and $X$ is a Polish space, any Borel measurable cocycle $c: G \times X \to H$ is \emph{nearly essentially continuous} in the sense that for any Borel probability measure on $X$ which is $G$-quasi-invariant and any $\epsilon >0$ there exists a compact set $K \subseteq X$ such that $\mu(K) > 1- \epsilon$ and a Borel subset $S \subseteq G \times X$ such that $(g,x) \in S$ whenever $x, gx \in K$ and so that the restriction of $c$ to $K$ is continuous \cite[Theorem 1.4.5]{MR2995370}. Several other results regarding the continuity of cocycles are discussed in \cite{MR2995370} and earlier references within. 
As far as we are aware, \Cref{thm:aut_cont_cocycle} is not directly covered by existing results in the literature.

We remark that in this paper we do not explore possible extensions beyond the setup of locally compact Polish groups. 
Automatic continuity holds in the context of Haar measurable homomorphisms between (not necessarily Polish) locally compact groups \cite{MR2983459,MR1069290}.
Answering a longstanding problem originating in Christensen’s seminal work on Haar null sets, Rosendal proved that any universally measurable homomorphism between Polish groups is automatically continuous \cite{MR3996719}. Rosendal's proof utilizes a ``Steinhaus type principle'' due to  Christensen \cite{MR0308322}. %, which is quite analogous to \Cref{thm:Weil_product_open}. 
One can wonder if   \Cref{thm:aut_cont_cocycle} can be similarly extended beyond the setting of locally compact Polish groups.

\emph{Acknowledgements:} The authors thank Nachi Avraham Re'em, Uri Bader, Eli Glasner, Yair Glasner, and the anonymous referee
for useful references, historical remarks %to Rosendal's article \cite{MR2535429}  
and helpful comments.
 The first author is supported by the Israel Science Foundation grant no. 985/23.
The second author is supported by ERC 2020 grant HomDyn (grant no.~833423).

\section{Continuity of cocycles}

We proceed to prove \Cref{thm:aut_cont_cocycle}.
The proof is based on the following theorem of Andr\'e Weil, which is  a generalization of  Steinhaus's theorem \cite{steinhaus1937distances} (see for instance \cite[Theorem 2.3]{MR2535429}):
\begin{thm}[H. Steinhaus, A. Weil]\label{thm:Weil_product_open}
    Let $G$ be a locally compact Polish group and $E \subseteq G$ a Haar measurable set of positive Haar measure. 
     Then the identity element of $G$ is contained in the interior of $EE^{-1}$.
    %Then $EE^{-1} \subseteq G$ contains an open     
    %{\color{blue is a}  neighborhood of $1_G$.
\end{thm}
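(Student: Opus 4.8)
\textbf{Proof proposal for Theorem \ref{thm:Weil_product_open} (Steinhaus--Weil).}

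The plan is to reduce the general locally compact case to a convolution argument modeled on Steinhaus's original idea for $\R$. Fix a left Haar measure $\mu$ on $G$ and let $E \subseteq G$ be Haar measurable with $\mu(E) > 0$. Since $\mu$ is a Radon measure, by inner regularity I can find a compact set $K \subseteq E$ with $0 < \mu(K) < \infty$; it suffices to prove that $1_G$ is interior to $KK^{-1} \subseteq EE^{-1}$. The key object is the convolution-type function $\psi: G \to [0,\infty)$ defined by
\[
\psi(g) = \mu\bigl(gK \cap K\bigr) = \int_G \ind_{gK}(x)\,\ind_K(x)\,d\mu(x) = \int_G \ind_K(g^{-1}x)\,\ind_K(x)\,d\mu(x).
\]
Observe that $\psi(1_G) = \mu(K) > 0$, and that $\psi(g) > 0$ forces $gK \cap K \neq \emptyset$, i.e.\ there exist $k_1,k_2 \in K$ with $gk_1 = k_2$, hence $g = k_2 k_1^{-1} \in KK^{-1}$. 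So the whole theorem comes down to showing that $\{g : \psi(g) > 0\}$ contains a neighborhood of $1_G$, and for that it is enough to prove $\psi$ is continuous at $1_G$ (it is then positive on an open set around $1_G$ by $\psi(1_G)>0$).

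For continuity of $\psi$, the first step is to approximate $\ind_K$ in $L^1(\mu)$ by a continuous compactly supported function: given $\epsilon > 0$, pick $h \in C_c(G)$ with $\|\ind_K - h\|_{L^1(\mu)} < \epsilon$ (density of $C_c(G)$ in $L^1$ of a Radon measure). Writing $L_g h(x) = h(g^{-1}x)$ for the left translation, I then estimate
\[
|\psi(g) - \psi(1_G)| \le \int_G |\ind_K(g^{-1}x) - \ind_K(x)|\,\ind_K(x)\,d\mu(x) \le \|L_g \ind_K - \ind_K\|_{L^1(\mu)},
\]
and the standard triangle inequality splits the right-hand side into $\|L_g\ind_K - L_g h\|_{L^1} + \|L_g h - h\|_{L^1} + \|h - \ind_K\|_{L^1}$. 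The outer two terms are each at most $\epsilon$ (the first because left Haar measure is left-invariant, so left translation is an $L^1$-isometry), and the middle term tends to $0$ as $g \to 1_G$ by uniform continuity of $h \in C_c(G)$ together with local compactness (the supports $g^{-1}\cdot \mathrm{supp}(h)$ stay inside a fixed compact set for $g$ near $1_G$, so the sup-norm bound converts to an $L^1$ bound with a uniform constant). Hence $\limsup_{g \to 1_G} |\psi(g) - \psi(1_G)| \le 2\epsilon$ for every $\epsilon$, giving continuity of $\psi$ at $1_G$.

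The main obstacle, and the place to be careful, is the middle term $\|L_g h - h\|_{L^1(\mu)} \to 0$: this is the assertion that translation acts continuously on $L^1$ of the Haar measure, which is exactly where local compactness of $G$ (to control supports and to have a genuine Radon Haar measure) and the Polish/second-countable hypothesis enter. I would prove it directly from uniform continuity of $h$: for $g$ in a fixed compact neighborhood $V$ of $1_G$, $\mathrm{supp}(L_g h) \cup \mathrm{supp}(h) \subseteq V\cdot\mathrm{supp}(h) =: C$, a compact set, so $\|L_g h - h\|_{L^1} \le \mu(C)\cdot \sup_{x}|h(g^{-1}x) - h(x)|$, and the sup tends to $0$ as $g \to 1_G$ because $h$ is uniformly continuous (a compactly supported continuous function on a topological group is left and right uniformly continuous). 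Everything else is routine measure theory. Note this argument only needs $E E^{-1}$, matching the statement; the symmetric refinement to $E^{-1}E$ or to $E F^{-1}$ for two positive-measure sets follows the same template but is not needed here.
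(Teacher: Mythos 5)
Your proof is correct: it is the classical Steinhaus--Weil convolution argument (pass to a compact $K\subseteq E$ of positive finite measure, show $\psi(g)=\mu(gK\cap K)$ is continuous at $1_G$ via $L^1$-approximation of $\ind_K$ by a function in $C_c(G)$ and continuity of translation on $L^1$, and note $\psi(g)>0$ implies $g\in KK^{-1}$). Note, however, that the paper does not prove this theorem at all --- it quotes it as a known result, citing Rosendal's survey \cite[Theorem 2.3]{MR2535429} and Steinhaus's original paper --- so there is no in-paper proof to compare against; your argument is essentially the standard one found in those references, and all the steps (inner regularity, the $L^1$-isometry of left translation under left Haar measure, and uniform continuity of compactly supported continuous functions) are correctly deployed.
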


We first introduce  a simple measure-theoretic lemma that does not involve groups:

\begin{lemma}\label{lem:pos_measure_small_diameter_image}
    Let $Y, Z$ be separable metric spaces, with $Z$ locally compact. 
    Let $\mu$ be a Radon measure on $Z$ and let $F: Z\times Z \to Y$ be a  function  such that $x_2 \mapsto F(x_1,x_2)$ is continuous for every $x_1 \in Z$ and $x_1 \mapsto F(x_1,x_2)$ is $\mu$-measurable for every $x_2 \in X$. Then for any $\epsilon >0$ and any measurable set $E_0 \subseteq Z$ with $\mu(E_0) >0$, there exists a measurable set $E \subset E_0$ with $\mu(E)>0$ such that the diameter of the image of $E \times E$ under $F$ is at most $\epsilon$ in the following sense:
    \[
    d_Y(F(x_1,x_2), F(\tilde x_1,\tilde x_2)) < \epsilon \mbox{ for every } x_1,x_2,\tilde x_1,\tilde x_2 \in E.
    \]
\end{lemma}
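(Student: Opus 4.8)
The plan is to first pass to a compact domain and then control the two arguments of $F$ by separate arguments that exploit the asymmetry of the hypotheses: in the second variable we have genuine continuity, while in the first variable only measurability is available, so there the second variable must be absorbed into a function-space coordinate.

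By inner regularity of the Radon measure $\mu$, I would replace $E_0$ by a compact subset $K\subseteq E_0$ with $\mu(K)>0$; it then suffices to find a suitable $E\subseteq K$. Fix a countable dense set $Q\subseteq K$, put $\delta:=\epsilon/4$, and equip $\mathcal{F}:=C(K,Y)$ with the uniform metric $d_\infty$. The one external input used is that $\mathcal{F}$ is separable, since $K$ is compact metric and $Y$ is separable metric. The core step for the first variable is to consider the map $\Phi\colon K\to\mathcal{F}$, $\Phi(x_1)=F(x_1,\cdot)|_K$, which is well defined because $F(x_1,\cdot)$ is continuous. For $f\in\mathcal{F}$ and $r>0$ one has $d_\infty(\Phi(x_1),f)=\sup_{q\in Q}d_Y(F(x_1,q),f(q))$ by continuity and density of $Q$, and hence
\[
\Phi^{-1}\bigl(B_{\mathcal{F}}(f,r)\bigr)=\bigcup_{m\ge 1}\;\bigcap_{q\in Q}\bigl\{x_1\in K: d_Y(F(x_1,q),f(q))\le r-\tfrac{1}{m}\bigr\},
\]
which is $\mu$-measurable, each set in the intersection being the preimage of a closed set under the $\mu$-measurable map $x_1\mapsto d_Y(F(x_1,q),f(q))$. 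Taking a countable dense $D\subseteq\mathcal{F}$, the sets $\Phi^{-1}(B_{\mathcal{F}}(f,\delta/2))$, $f\in D$, cover $K$, so for some $f_0\in D$ the set $E_2:=\Phi^{-1}(B_{\mathcal{F}}(f_0,\delta/2))\cap K$ has $\mu(E_2)>0$; then $d_Y(F(x_1,x_2),F(\tilde x_1,x_2))<\delta$ for all $x_1,\tilde x_1\in E_2$ and all $x_2\in K$.

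For the second variable, fix $x_1^{*}\in E_2$ and let $g:=F(x_1^{*},\cdot)|_K$, which is continuous. Covering $Y$ by countably many balls of radius $\delta$ and pulling back under $g$, one gets a measurable set $E:=E_2\cap g^{-1}(B_Y(y_0,\delta))$ with $\mu(E)>0$ and $d_Y(g(x_2),g(\tilde x_2))<2\delta$ for all $x_2,\tilde x_2\in E$. For arbitrary $x_1,x_2,\tilde x_1,\tilde x_2\in E$, inserting $F(x_1^{*},x_2)$ and $F(x_1^{*},\tilde x_2)$ and applying the triangle inequality gives
\[
d_Y(F(x_1,x_2),F(\tilde x_1,\tilde x_2))<\delta+2\delta+\delta=\epsilon ,
\]
and since $E\subseteq E_0$ with $\mu(E)>0$ this is the required set.

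The hard part will be none of the above steps individually — all are routine once set up — but rather the single external fact that $C(K,Y)$ is separable for $K$ compact metric and $Y$ separable metric, which deserves an explicit statement and justification (for instance by embedding $Y$ isometrically into a separable Banach space $B$ via the Kuratowski embedding into $\ell^\infty$ followed by passing to a closed linear span, and then invoking the separability of $C(K,B)$).
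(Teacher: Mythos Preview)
Your proof is correct and follows essentially the same route as the paper: pass to a compact $K\subseteq E_0$, use separability of $C(K,Y)$ together with measurability of $x_1\mapsto F(x_1,\cdot)|_K$ (verified via a countable dense subset of $K$) to find a positive-measure set on which the first-variable oscillation is at most $\delta$, and then refine to control the second-variable oscillation before finishing with a three-term triangle inequality. The only cosmetic difference is in the second refinement: the paper uses uniform continuity of the center function $f$ on the compact $K$ to pass to a finite open cover, whereas you fix a representative $x_1^{*}$ and pull back a countable cover of $Y$ by $\delta$-balls under $g=F(x_1^{*},\cdot)$; both yield the same $4\delta$ estimate.
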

\begin{proof}
    Let $Z$, $\mu$ and $F: Z \times Z \to Y$ be as in the statement. Fix $\epsilon >0$ and a  measurable set $E_0 \subseteq Z$ with $\mu(E_0) >0$.
    Because $\mu$ is a Radon measure, we can find a compact set $K\subseteq E_0$ such that 
   $\mu(K)>0$.
   Because $K$ is a compact metric space, it follows that the space $C(K, Y)$ of continuous functions from $K$ to $Y$ is separable.
   Given $f \in C(K,Y)$ and $\delta >0$, let
   \[
   A_{f,\delta} = \left\{\tilde f \in C(K,Y)~:~ d_Y(\tilde f(x),f(x)) \le \delta ~\forall~ x\in K \right\},
   \]
   and let 
   \[
   E_{f,\delta} =\left\{ x \in K~:~ 
   F_x \in A_{f,\delta}
   \right\}.
   \]
   where for $x \in Z$, the map  $F_x \in C(K,Y)$ is given by $F_x(\tilde x) = F(x,\tilde x)$. 
   Choose $\delta = \epsilon/4$. 
   
   Let us explain why the set $E_{f,\delta} \subseteq K$ is $\mu$-measurable.
   Since $Z$ is separable, so is $K$. Let  $\{x_1,\ldots,x_n,\ldots\}  \subseteq K$ be a dense countable set.
   By continuity of $f$ and $F_x$,
   the set $E_{f,\delta} \subseteq K$ can be written as
   \[
   E_{f,\delta} = \bigcap_{i=1}^\infty\left\{
   x \in K~:~ d_Y(F(x,x_i),f(x_i))\le \delta
   \right\}.
   \]
   Because $x \mapsto F(x,x_i)$ is $\mu$-measurable for every $i \in \mathbb{N}$, we see that $E_{f,\delta}$ is a countable intersection of $\mu$-measurable sets.

   Then for every $f \in C(K, Y)$ the set $A_{f,\delta}\subseteq C(K, Y)$ is a closed neighborhood of $f$. Since $C(K,Y)$ is separable there is a countable collection  $T \subseteq C(K,Y)$ such that $C(K,Y) = \bigcup_{f\in T} A_{f,\delta}$. Hence $K = \bigcup_{f \in T}E_{f,\delta}$. 
   Because $\mu(K)>0$, there exists $f \in T$ such that $\mu(E_{f,\delta})>0$. Fix such $f$. Since  $f$ is continuous and $K$ is compact, there exists a finite cover open cover $\mathcal{U}$ of $K$ such that for every $U \in \mathcal{U}$ and every $x_1,x_2 \in U$ we have $d_Y(f(x_1),f(x_2)) < \delta$. Because $\mu(E_{f,\delta})>0$ and
   $\mathcal{U}$ covers $K$, there exists $U \in \mathcal{U}$ such that $\mu(E_{f,\delta} \cap U) > 0$.
   Choose $E= E_{f,\delta} \cap U$. Clearly, $E \subseteq E_0$ is measurable and $\mu(E)>0$.
   Now for any $x_1,x_2,\tilde x_1,\tilde x_2 \in E$, we have 
   \[
   d_Y(F(x_1,x_2),F(\tilde x_1,\tilde x_2)) \le
   d_Y(F(x_1,x_2),f(x_2))+ d_Y(f(x_2),f(\tilde x_2)) + d_Y(f(\tilde x_2),F(\tilde x_1,\tilde x_2)).
   \]
   Since $x_1,x_2 \in E \subseteq E_{f,\delta}$ and $x_2,\tilde x_2 \in K$
   we have that $d_Y(F(x_1,x_2),f(x_2))< \delta$ and $d_Y(f(\tilde x_2),F(\tilde x_1,\tilde x_2))< \delta$.
   Since $x_2,\tilde x_2 \in E \subseteq U$, we have that $d_Y(f(x_2),f(\tilde x_2))<\delta$.
   So altogether, $d_Y(F(x_1,x_2),F(\tilde x_1,\tilde x_2))< 3\delta < \epsilon$.
\end{proof}

\begin{proof}[Proof of \Cref{thm:aut_cont_cocycle}]
 Let $c: G \times X \to H$ be a 
 cocycle that satisfies the conditions in the statement of \Cref{thm:aut_cont_cocycle}
. %Haar measurable cocycle such that $(g,x) \mapsto c(g,x)$ is continuous with respect to the variable $x\in X$ for every $g \in G$.
We first prove that for every $x_0 \in X$ the function $c: G \times X \to H$ is continuous at $(1_G,x_0)$.   
%That is, we need to show that for every $x_0 \in X$ we have
%\[\lim_{(g,x) \to (1_G,x_0)} c(g,x) = c(1_G,x_0)=1_H.\]
%Fix $x_0 \in X$
We need to prove the following: For every $x_0 \in X$ and every open neighborhood $V \subseteq H$ of $1_H$ there exist open sets $U \subseteq G$ and $W \subseteq X$ with $(1_G,x_0) \in U \times W$ so that  $U \times W \subseteq c^{-1}(V)$.
%for every $(g,x) \in U \times W$ we have $c(g,x) \in V$.
Fix $x_0 \in X$ and any open set $V \subseteq H$ with $1_H \in V$. 

We fix  metrics $d_H,d_H$ and $d_X$ on $G$, $X$, and $H$ respectively that are compatible with their Polish topology, and so that any closed ball is compact (see for instance \cite{MR0891165}).
For every point $g\in G$ and $r>0$ denote by $B_g(r)$ the open ball of radius $r$ around $g$ with respect to $d_G$. Similarly, for $x \in X$ and $h \in H$, $B_x(r)$ and $B_h(r)$ denote the radius $r$ ball with respect to $d_X$ and $d_H$ around $x$ and $h$ respectively.
Find $\epsilon >0$ such that $V\supseteq B_{1_H}(\varepsilon)$.
%Find an open set $V_1 \subseteq H$ with $1_H \in V_1$ such that $V_1 V_1^{-1} \subseteq V$.
%Let $m_G$ denote Haar measure on $G$ and 
%let $Z \subseteq G$ be a compact subset with $m_G(Z) >0$.  

Since $X$ is locally compact, we can find a compact subset $X_0 \subseteq X$ that contains $x_0$ in its interior.
For every $g \in G$ there exists $r_g > 0$ so that $c(g, x)\in \overline{B_{1_H}(r_g)}$ for every $x \in X_0$.
 Since $r_g$ can be chosen by 
\[r_g = \sup_{i \in \mathbb{N}} d_H(c(g,x_i), 1_H),\]
where $(x_i)_{i=1}^\infty$ is a dense subset of $X_0$, we deduce that $g\mapsto r_g$ is measurable. 
Since $G = \bigcup_{r>0}\{g\in G:r_g < r\},$ we deduce that  there exists $r_0>0$ so that $E_0 = \{g\in G:r_g < r_0\}$ has positive Haar measure. %Let $K = \overline{B_{1_H}(r_0)}$ be a closed ball. The set $K$ is  compact by the choice of the metric. 
%By the definition of $r_g$, for every $g\in E_0$ and for every $x\in X_0$, we have that $c(g, x)\in K$. 
%Since $H$ is a countable union of compact sets, we can find 
% a compact subset {\color{blue} $K \subseteq H$} such that the set
% \[
% E_0 = \left\{ g \in G~:~ c(g,x) \in {\color{blue} K} \mbox{ for all } x \in X_0 \right\}.
% \]
% has a positive Haar measure {\color{blue} (the argument for Haar measurability of  $E_0$ is similiar to the one appearing in the proof of \Cref{lem:pos_measure_small_diameter_image})}.

% {\color{blue} Because multiplication in $H$ is continuous, for any $h \in H$ there exists $\delta >0$ so that $B_h(\delta) (B_h(\delta))^{-1}\subseteq B_{1_H}(\varepsilon)$.
Since the closed ball $\overline{B_{1_H}(r_0)}$ is compact and multiplication in $H$ is continuous, there exists $\delta > 0$ such that for any  $h \in \overline{B_{1_H}(r_0)}$, we have $B_h(\delta) (B_h(\delta))^{-1}\subseteq B_{1_H}(\varepsilon)$. %is contained in a ball of radius $\epsilon$ around $1_H$ in $H$.
Set $Y = H$ and $Z=G$. Define $F:Z \times Z \to Y$ by $F(g_1,g_2)= c(g_1,g_2(x_0))$. 
Then we can apply \Cref{lem:pos_measure_small_diameter_image} and find a set $E_1 \subseteq E_0$ of positive Haar measure and a closed ball $\tilde B = \overline{B_{h_0}(\delta/2)}$  with $h_0\in B_{1_H}(r_0)$ such that $c(g_1,g_2(x_0)) \in \tilde B$ for all $g_1,g_2 \in E_1$.

Let $E_2$ be a compact subset of $E_1$  of positive Haar measure. 
Fix $g_1\in E_2$. 
Since $c$ is continuous in the second argument, the map $(g_2, x)\mapsto c(g_1, g_2(x))$ is continuous, and hence 
\[U_{g_1} = \{(g_2, x)\in G\times X_0: c(g_1, g_2(x)) \in B_{h_0}(\delta)\}\] is open. 
Since $E_2 \times \{x_0\}\subseteq U_{g_1}$ there is $\tilde \zeta_{g_1}>0$ such that 
$E_2\times B_{x_0}(\tilde \zeta_{g_1}) \subseteq U_{g_1}$.
Let 
\[\zeta_{g_1} = \inf\{d(x_i, x_0): i=1,2,\dots, j = 1,\dots, \text{ so that }c(g_1, g_{2, j}(x_i)) \notin B_{h_0}(\delta)\}/2,\]
where $(x_i)_{i=1}^\infty$ is a dense subset of $X_0$ and $(g_{2,j})_{j=1}^\infty$ is a dense subset of $E_2$. 
Then $\zeta_{g_1}$ depends measurably on $g_1$ and satisfies that 
\begin{enumerate}
  \item $\zeta_{g_1} \ge \tilde \zeta_{g_1}/2 > 0$;
  \item for every $g_2 \in E_2, x \in B_{x_0}(\zeta_{g_1})$ we have $c(g_1, g_{2}(x)) \in \overline{B_{h_0}(\delta)}$.
\end{enumerate}
Let $B = \overline{B_{h_0}(\delta)}$. 
Let $\zeta>0$ be a number such that $E = \{g_1\in E_2: \zeta_{g_1} > \zeta\}$ has positive Haar measure. 
Denoting $W=B_{x_0}(\zeta)$, we obtain that for every $x \in W$ and every $g_1,g_2 \in E$ we have $c(g_1,g_2(x)) \in B$.

Let $U = E E^{-1} \subseteq G$.
By Weil's  \Cref{thm:Weil_product_open}, $U$ is a neighborhood of $1_G$.
Now suppose $g \in U$ and $x \in W$. Then there exists $g_1,g_2 \in E$ such that $g= g_1 g_2^{-1}$, and so
\[
c(g,x)=c(g_1g_2^{-1},x)=c(g_1,g_2^{-1}\cdot x)\left( c(g_2,g_2^{-1}\cdot x)\right)^{-1} \in B B^{-1} \subseteq V.\]

%Since $x \in W_1$ and $g_1,g_2 \in U_1$, it follows that $g_2^{-1} (x) \in W$ {\color{teal} actually seems that $g_2^{-1} (x) \in \tilde g^{-1}W$!!!!} . Thus 
%$c(g_1,g_2^{-1}x), c(g_2,g_2^{-1} x) \in V_1h_0$. It follows that $c(g,x) \in V$.
This completes the proof that $c: G \times X \to H$ is continuous at $(1_G,x_0)$ for every $x_0 \in X$.
To complete the proof that $c: G \times X \to H$ is continuous at every point $(g_0,x_0) \in G \times X$ recall the cocycle equation
\[c(gg_0,x)= c(g,g_0\cdot  x)c(g_0,x).\]
So 
\[
\lim_{(g,x) \to (g_0,x_0)}c(g,x) = 
\lim_{(g,x) \to (1_G,x_0)}c(gg_0,x)=\]
\[
\lim_{(g,x) \to (1_G,x_0)} c(g,g_0 \cdot x)c(g_0,x) =
\lim_{(g,x) \to (1_G,x_0)} c(g,g_0 \cdot x)\cdot  \lim_{(g,x) \to (1_G,x_0)} c(g_0,x) =
1_H \cdot c(g_0,x_0).
\]
\end{proof}

\bibliographystyle{amsplain}
%\nocite{*}
\bibliography{library}
\end{document}